\theoremstyle{plain}
\newcounter{theo}
\newtheorem{thm}[theo]{Theorem}
\newtheorem{prop}[theo]{Proposition}
\newtheorem{lem}[theo]{Lemma}
\theoremstyle{definition}
\newtheorem{rem}[theo]{Remark}
\title[a counterexample to the $2-$jet determination Chern-Moser Theorem in higher codimension]{a  counterexample to the $2-$jet determination Chern-Moser Theorem in higher codimension}
\author{Francine Meylan}
\newcommand{\C}{\mathbb{C}}
\newcommand{\Imm}{\mathrm{Im}\,}
\begin{document}

\maketitle

\begin{abstract}
One  constructs an example of a  generic  quadratic  submanifold of  codimension $5$ in $\Bbb C^9$ which  admits a real analytic 
infinitesimal CR automorphism with homogeneous polynomial coefficients of degree $3.$
\end{abstract}

\section{Introduction}
Let $M$ be a real-analytic  submanifold of $\C^N$ of codimension $d.$  Consider the set of germs of biholomorphisms $F$  at a point $p\in M$  such that $F(M)\subset M$. By the   work of  Chern and Moser\cite{CM}, if the  codimension $d=1,$  every such $F$ is uniquely determined by its derivatives  at $p  $ provided that its  Levi map  at $p$ is non-degenerate. See also Cartan and Tanaka ( \cite{Ca}, 
 \cite{Ta}).

%For a real hypersurface in complex dimension 2, H. Poincar\'e initiated the study of the stability group  by looking at Taylor series expansion: the condition $F(M)\subset M$ means that %$\rho(F(z,w))_{|M}=0$, where $\rho$ is a defining function of $M$, and this equation gives some constraints on the Taylor series coefficients of $F$. The process was carried out much later %in a significant manner by J.K. Moser for Levi non-degenerate hypersurfaces, to obtain the following $2$-jet determination statement:

\begin{thm} \cite{CM}
Let $M$ be a real-analytic hypersurface through a point $p$ in $\C^N$ with non-degenerate Levi form at $p$. Let $F$, $G$ be two germs of biholomorphic maps preserving $M$. Then, if $F$ and $G$ have the same 2-jets at $p$, they coincide.
\end{thm}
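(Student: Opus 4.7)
The plan is to reduce the statement to a question of injectivity of the $2$-jet map on the local stability group. Setting $H = G^{-1} \circ F$, the hypothesis becomes: $H$ is a biholomorphism preserving $M$, fixing $p$, with $j^2_p H = j^2_p \mathrm{id}$, and we must conclude $H = \mathrm{id}$.

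First I would put $M$ into Chern--Moser normal form at $p$ by choosing local coordinates $(z,w) \in \C^{N-1} \times \C$ in which $p = 0$ and the hypersurface is defined by
$$\Imm w = \langle z, \bar z \rangle + \sum_{k,l \geq 2} N_{k,l}(z, \bar z, \Ree w),$$
where $\langle \cdot, \cdot \rangle$ denotes the non-degenerate Hermitian Levi form at $p$, each $N_{k,l}$ is bi-homogeneous of bidegree $(k,l)$ in $(z,\bar z)$, and satisfies the Moser trace-type constraints that pin down the normalization beyond the quadratic level. The existence and convergence of this normal form is the technical heart of \cite{CM}, which I would invoke directly rather than reprove.

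Next I would analyze how $H$ acts on this normal form. Pushing forward the defining equation by $H^{-1}$ yields a new defining equation for $M$; re-normalizing it and appealing to the uniqueness part of the normal-form theorem places $H$ inside an explicit finite-dimensional family of local biholomorphisms, classically parametrized by a Levi-preserving linear isomorphism $U$, a real scaling $\rho > 0$, a vector $a \in \C^{N-1}$, and a real number $r$. The pair $(U, \rho)$ is read off the $1$-jet of $H$ at $0$, while $(a, r)$ appear as the leading nontrivial coefficients of the $2$-jet. Consequently the map $H \mapsto j^2_0 H$ is injective when restricted to this stability group.

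Applying this injectivity to $H = G^{-1} \circ F$, whose $2$-jet at $p$ is trivial, forces $U = I$, $\rho = 1$, $a = 0$, $r = 0$, hence $H = \mathrm{id}$ and $F = G$. The main obstacle is step two: building the Chern--Moser normal form and then producing the explicit parametrization of the stability group of the hyperquadric $\Imm w = \langle z, \bar z \rangle$. Once those two ingredients are in hand, matching parameters to jet coefficients is a routine bookkeeping calculation.
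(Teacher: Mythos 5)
The paper offers no proof of this theorem---it is quoted verbatim from \cite{CM}---and your outline is exactly the argument of that source: reduce to $H=G^{-1}\circ F$, pass to the Chern--Moser normal form, and use the uniqueness of the normalizing transformation with prescribed initial data $(U,a,\rho,r)$ in the isotropy group of the hyperquadric to conclude that the $2$-jet at $p$ determines $H$, hence $H=\mathrm{id}$. One caution on phrasing: for a general $M$ in normal form the automorphism $H$ is \emph{not} itself a member of the finite-dimensional fractional-linear family (that family is the stability group of the model hyperquadric, not of $M$); the correct statement, which is what the uniqueness part of the normal-form theorem actually delivers and what your final injectivity claim uses, is that $H$ is uniquely \emph{determined} by its associated parameters $(U,a,\rho,r)$, and these are read off from $j^2_p H$. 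With that reading the argument is complete, modulo the normal-form existence and uniqueness machinery that you correctly invoke from \cite{CM} rather than reprove.
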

Note that the result becomes false without any hypothesis on the Levi form (See for instance  \cite{bl-me1}). A generalization of this Theorem to  real-analytic submanifolds  $M$ of higher codimension $d>1$ has been proposed by Beloshapka in \cite{Be2} (and quoted many times by several authors)  under the hypothesis that  $M$ is  Levi generating (or equivalently of finite type with $2$ the only H\"ormander number) with  non-degenerate Levi map. Unfortunately, an error has been discovered  and explained in \cite{bl-me1}. In this short note, one  constructs an example of a  generic (Levi generating  with non-degenerate Levi map)   quadratic  submanifold  that admits  an element in its stability group which has the same $2-$jet as the identity map but is not the identity map. In addition, this example is Levi non-degenerate in the sense of Tumanov. One  points out that if $M$ is strictly pseudoconvex, that is Levi non-degenerate  in the sense of Tumanov with a positivity condition,  then by a recent result of Tumanov\cite{tu3}, the  $2$-jet determination result holds in any codimension.

%In the case of  $M$ being a  real-analytic submanifold of codimension $d>1$ in $\C^N$,  Beloshapka in \cite{Be2} considers  seems the most relevant for us in the approach by model %submanifolds. More precisely, we present a detailed explanation of the first part of his original  proof in \cite{Be2} which shows   finite jet   determination. Nevertheless, we explain why, %following his proof, we can not conclude $2$-jet determination in the general case. However we obtain $2$-jet determination (using the same notion of nondegeneracy) in two special cases: in %$\Bbb C^4$ without the assumption of smoothness through the technics of analytic discs (see \cite {be-bl-me}), and for  codimension $2$  in $\Bbb C^N$ through the technics developed in %\cite{KMZ} (see  \cite{Bl-Me}).

One also  points out that
finite jet determination problems for   submanifolds has attracted  much attention.
 One  refers  in particular  to the papers of Zaitsev \cite{Za}, Baouendi, Ebenfelt and Rothschild \cite{BER1}, Baouendi, Mir and Rothschild \cite{BMR}, Ebenfelt, Lamel and Zaitsev \cite{eb-la-za},  Lamel and Mir \cite{la-mi}, Juhlin \cite{ju}, Juhlin and Lamel \cite{ju-la}, Mir and Zaitsev \cite{mi-za} in the real analytic case, Ebenfelt \cite{eb}, Ebenfelt and Lamel \cite{eb-la}, Kim and Zaitsev \cite{ki-za}, Kolar, the  author and Zaitsev \cite{KMZ}  in the
$ \mathcal{C}^\infty$ case,  Bertrand and Blanc-Centi \cite{be-bl}, Bertrand, Blanc-Centi  and the  author \cite {be-bl-me}, Tumanov \cite{tu3}  in the finitely smooth case.
\bigskip

\section{The  Example}
%\subsection {Properties}
Let $M \subseteq \C^{9}$ be the  real submanifold of (real) codimension $5$ through $0$ given  in the coordinates $(z,w)=(z_1, \dots,z_4, w_1, \dots, w_5)  \in \C^{9},$ by 
\begin{equation}\label{Fe}
\begin{cases}
\Imm w_1= P_1 (z, \bar z)=z_1 \overline{ z_2} +  z_2 \overline{ z_1} \\
\Imm w_2=P_2 (z, \bar z)= -i z_1 \overline{ z_2} + i z_2 \overline{ z_1} \\
 \Imm w_3=P_3 (z, \bar z) = z_3 \overline{ z_2} + z_4 \overline{ z_1} + z_2 \overline{ z_3} + z_1 \overline{ z_4} \\
 \Imm w_4=P_4 (z, \bar z)= z_1 \overline{ z_1}  \\
 \Imm w_5=P_5 (z, \bar z) =z_2 \overline{ z_2}  \\
\end{cases}
\end{equation}

The matrices corresponding to the $P_i's$ are 
$$A_1=\left(\begin{array}{cccc}0&1&0&0\\1&0&0&0\\0&0&0&0\\0&0&0&0\end{array}\right)\qquad 
A_2=\left(\begin{array}{cccc}0&-i&0&0\\i&0&0&0\\0&0&0&0\\0&0&0&0\end{array}\right)\qquad 
A_3=\left(\begin{array}{cccc}0&0&0&1\\0&0&1&0\\0&1&0&0\\1&0&0&0\end{array}\right)\qquad $$
$$
A_4=\left(\begin{array}{cccc}1&0&0&0\\0&0&0&0\\0&0&0&0\\0&0&0&0\end{array}\right)\qquad 
A_5=\left(\begin{array}{cccc}0&0&0&0\\0&1&0&0\\0&0&0&0\\0&0&0&0\end{array}\right)\qquad 
$$

\begin{lem} The following holds:
\begin{enumerate}
\item the $A_i's$ are linearly independent, \\
\item the $A_i's$ satisfy the condition of Tumanov, that is, there is $c \in \Bbb R^d$ such that $\det{\sum c_jA_j}\ne 0.$
\end{enumerate}
\end{lem}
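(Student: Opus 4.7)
The plan is to verify both claims by direct inspection of the five explicit matrices. For (1), I assume a real linear relation $c_1 A_1 + c_2 A_2 + c_3 A_3 + c_4 A_4 + c_5 A_5 = 0$ and read off the scalars from specific entries of the sum. The $(1,1)$ entry is $c_4$, the $(2,2)$ entry is $c_5$, and the $(1,4)$ entry is $c_3$, so these three vanish immediately. The $(1,2)$ and $(2,1)$ entries equal $c_1 - i c_2$ and $c_1 + i c_2$ respectively; since the $c_j$ are real, both must vanish, forcing $c_1 = c_2 = 0$.

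For (2), the cleanest approach is to exhibit one specific $c$. I take $c = (0,0,1,0,0) \in \R^5$, so that $\sum_j c_j A_j = A_3$. The matrix $A_3$ is the $4\times 4$ anti-diagonal matrix with $1$'s on the anti-diagonal, i.e.\ the permutation matrix for the involution $j \mapsto 5-j$, which is a product of two disjoint transpositions. Hence $\det A_3 = 1 \neq 0$, which is all that is needed to verify Tumanov's condition.

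If a more informative statement is desired, one can instead compute the determinant of the general combination $M(c) := \sum_j c_j A_j$. Expanding along the fourth row, whose only nonzero entry is $c_3$ at column $1$, and then expanding the resulting $3\times 3$ minor along its third row (again with only $c_3$ at its first position), collapses the computation to $\det M(c) = c_3^4$. This shows that Tumanov's condition is in fact satisfied on a dense open subset of $\R^5$. No genuine obstacle is expected; the essential content of the lemma is simply that $A_3$ alone is already invertible, which lets us sidestep any interaction between the different $A_i$'s.
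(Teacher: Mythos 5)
Your proof is correct: the entry-by-entry extraction of the coefficients gives linear independence, the choice $c=(0,0,1,0,0)$ reduces Tumanov's condition to $\det A_3=1\neq 0$, and your supplementary computation $\det\bigl(\sum_j c_jA_j\bigr)=c_3^4$ checks out. The paper states this lemma without proof, and your argument is exactly the routine direct verification the author leaves to the reader.
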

\begin{prop}
The real submanifold  $M$ given by \eqref{Fe} is Levi generating at $0,$ that is, of finite type with $2$ the only H\"ormander number, and its Levi map is  non-degenerate.
\end{prop}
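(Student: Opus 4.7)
My plan is to translate both assertions of the Proposition into linear-algebraic statements about the Hermitian matrices $A_1,\dots,A_5$, and then to invoke the preceding Lemma.

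For the first assertion, I would use the fact that, because $M$ is a quadratic model of the form \eqref{Fe}, its Levi map at $0$ takes values in the normal space $\R^5$ and its $j$-th component is precisely the Hermitian form $P_j$ with matrix $A_j$. Hence $M$ is Levi generating at $0$, that is of finite type with $2$ as the only H\"ormander number, if and only if these forms span the normal space, i.e.\ if and only if $A_1,\dots,A_5$ are linearly independent over $\R$. This is exactly part (1) of the Lemma.

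For the second assertion, I would use the standard reformulation that, for a quadratic model, non-degeneracy of the Levi map at $0$ amounts to triviality of the joint kernel $\bigcap_{j=1}^{5}\ker A_j\subset\C^4$. This follows from part (2) of the Lemma in a single line: if $c\in\R^5$ is such that $\sum_{j}c_j A_j$ is invertible, then any $z$ annihilated by every $A_j$ is also annihilated by this invertible combination, so $z=0$. As an explicit sanity check one can read it off directly from the matrices: $A_4 z=0$ forces $z_1=0$, $A_5 z=0$ forces $z_2=0$, and then $A_3 z=0$ forces $z_3=z_4=0$.

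No serious obstacle is expected here: both ingredients are already packaged in the Lemma, and the only point requiring care is the standard dictionary between the CR-geometric notions (Levi generating, non-degenerate Levi map) and the linear-algebraic conditions on the $A_j$'s that hold for any quadratic model.
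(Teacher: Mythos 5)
Your proposal is correct and is in substance the same argument the paper intends: the paper's proof simply cites Proposition 8, Lemma 3 and Remark 4 of \cite{bl-me1}, which contain exactly the dictionary you spell out (for a quadric model, Levi generating $\Leftrightarrow$ the $A_j$'s are linearly independent, and non-degeneracy of the Levi map $\Leftrightarrow$ $\bigcap_j\ker A_j=\{0\}$, the latter following from Tumanov's condition in the Lemma). Your version is just more self-contained, and your explicit kernel computation is a valid sanity check.
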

\begin{proof}
This follows for instance from Proposition 8, Lemma 3 and Remark 4 in \cite{bl-me1}.
\end{proof}
\begin{rem} The following identity between  the $P_i's$ holds:
\begin{equation}\label{fer1}
{P_1}^2 + {P_2}^2 -4 P_4P_5=0.
\end{equation}
\end{rem}
The following  holomorphic vectors fields  are in $hol(M,0),$ the set of germs of real-analytic infinitesimal CR automorphisms at $0.$

\begin{enumerate}
\item $ X:=i(z_1
\dfrac{\partial}{\partial {z_3}}+z_2 \dfrac{\partial}{\partial {z_4}})$
\item $Y:= i(-iz_1\dfrac{\partial}{\partial {z_3}}+iz_2 \dfrac{\partial}{\partial {z_4}})$
\item $Z:= i(z_1\dfrac{\partial}{\partial {z_4}})$
\item $U:= i(z_2\dfrac{\partial}{\partial {z_3}})$
\end{enumerate}

\begin{lem} Let $P=(P_1, \dots, P_4).$
The following holds:
\begin{enumerate}
\item $X(P)=(0,0,iP_1,0,0)$
\item $Y(P)=(0,0,iP_2,0,0)$
\item $Z(P)=(0,0,iP_4,0,0)$
\item $U(P)=(0,0,iP_5,0,0).$
\end{enumerate}
\end{lem}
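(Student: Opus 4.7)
\medskip

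\noindent\textbf{Proof plan.} The statement is a routine verification, so the plan is to reduce it to computing a single derivative in each case and invoke the very special form of the defining functions.

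First I would observe that each of the four vector fields $X,Y,Z,U$ is a $\C$-linear combination of $\partial_{z_3}$ and $\partial_{z_4}$ with coefficients in $z_1,z_2$ only. Consequently, when applied to a function depending only on $z_1,z_2,\bar z_1,\bar z_2$, each of these fields produces zero. Since $P_1,P_2,P_4,P_5$ involve only $z_1,z_2$ and their conjugates, this immediately gives the vanishing of all components of $X(P)$, $Y(P)$, $Z(P)$, $U(P)$ except possibly the third.

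Second, for the third component I would compute the action on
\[
P_3 = z_3\bar z_2 + z_4\bar z_1 + z_2\bar z_3 + z_1\bar z_4,
\]
noting that only the first two summands contribute because $\partial_{z_3}$ and $\partial_{z_4}$ annihilate the antiholomorphic factors. Thus $\partial_{z_3}P_3 = \bar z_2$ and $\partial_{z_4}P_3 = \bar z_1$, and a direct substitution gives
\[
X(P_3)=i(z_1\bar z_2+z_2\bar z_1)=iP_1,\qquad Y(P_3)=i(-iz_1\bar z_2+iz_2\bar z_1)=iP_2,
\]
\[
Z(P_3)=iz_1\bar z_1=iP_4,\qquad U(P_3)=iz_2\bar z_2=iP_5.
\]

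There is no real obstacle here; the only point requiring care is to check the vanishing of every component other than the third, which is automatic from the structural remark above. The identity is engineered precisely so that $P_3$ is the unique ``mixing'' polynomial whose partial derivatives in $z_3,z_4$ reproduce the other $P_i$'s after multiplying by the appropriate $z_j$'s.
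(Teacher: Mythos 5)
Your verification is correct and is exactly the routine computation the paper leaves implicit (it states this lemma without proof). The structural observation that $P_1,P_2,P_4,P_5$ are independent of $z_3,z_4$, together with $\partial_{z_3}P_3=\bar z_2$ and $\partial_{z_4}P_3=\bar z_1$, is all that is needed.
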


\begin{lem} The following identities  hold:

\begin{enumerate}
\item $P_1 (-Y(P)) +P_2 (X(P)=0$
\item $P_1X(P) + P_2(Y(P) + P_5 (-2Z(P)) + P_4 (-2U(P)) =0$
\item $P_2(-2Z(P)) + P_4(2Y(P) =0$
\item $P_2 (-2U(P)) + P_5 (2Y(P) =0$
\end{enumerate}
\end{lem}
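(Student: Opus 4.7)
The plan is to substitute the explicit expressions for $X(P), Y(P), Z(P), U(P)$ given by the previous lemma and then verify each identity componentwise. By that lemma, each of these four tuples has all components equal to $0$ except possibly the third, which is $iP_1$, $iP_2$, $iP_4$, $iP_5$ respectively. Consequently, every identity in the statement is a tuple identity in which only the third component can be non-trivial, so in each case it reduces to a single scalar identity among the polynomials $P_1, P_2, P_4, P_5$.

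I would then dispose of the four identities in turn. For (1), the third component is $-iP_1 P_2 + iP_2 P_1 = 0$, which is immediate. For (3), the third component reads $-2iP_2 P_4 + 2iP_4 P_2 = 0$, and (4) is analogous with $P_4$ replaced by $P_5$; both vanish by commutativity of multiplication. The only identity requiring genuine input is (2): its third component equals
\begin{equation*}
iP_1^2 + iP_2^2 - 2iP_5 P_4 - 2iP_4 P_5 = i\bigl(P_1^2 + P_2^2 - 4 P_4 P_5\bigr),
\end{equation*}
which vanishes by the identity $P_1^2 + P_2^2 - 4P_4 P_5 = 0$ recorded in the Remark.

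Thus the proof essentially has no hard step: identities (1), (3), (4) are tautological once the previous lemma is invoked, and the only substantive content of the statement is that (2) is equivalent to the Remark's quadratic relation among the $P_i$. The lemma can therefore be viewed as a convenient repackaging of that quadratic relation, tailored for the subsequent construction of a non-trivial element in the stability group of $M$ with trivial $2$-jet.
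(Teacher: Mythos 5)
Your proof is correct and is exactly the intended verification: the paper states this lemma without proof, as a direct computation from the preceding lemma's formulas for $X(P),Y(P),Z(P),U(P)$ together with the Remark's relation $P_1^2+P_2^2-4P_4P_5=0$, which is precisely what you carry out. Your observation that only identity (2) has genuine content (the other three being instances of commutativity) is accurate.
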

With the help of the Lemmata, one obtains

\begin{thm}  Let  $Y_0=-Y, \ \ Y_1=2Y, \ \ Z_1 = -2Z, \ \ U_1= -2U.$

The holomorphic vector field $T$ defined by
\begin{equation}
T= \dfrac{1}{2}{w_1}^2Y_0 + \dfrac{1}{2}{w_2}^2 Y +
w_1w_2 X + w_2w_5 Z_1 +w_2w_4 U_1 + w_4w_5 Y_1
\end{equation} is in  $ hol (M,0).$

%where $Y_0=-Y, \ \ Y_1=2Y, \ \ Z_1 = -2Z, \ \ U_1= -2U.$

Hence $2-$jet determination does not hold for germs of  biholomorphisms sending  $M$ to $M.$
\end{thm}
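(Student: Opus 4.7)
The plan is to verify that $T$ is tangent to $M$ along $M$, hence $T\in hol(M,0)$, and then to observe that every coefficient of $T$ is a polynomial of degree $3$ in $(z,w)$ vanishing to order $3$ at the origin; consequently the $2$-jet of $T$ at $0$ is zero, so the time-$1$ flow $\exp T$ is a germ of a biholomorphism preserving $M$ that shares its $2$-jet at $0$ with the identity but is distinct from it. Since $T$ has no $\partial/\partial w_k$-components, the tangency of $T+\bar T$ to $M$ is equivalent to
\begin{equation*}
\Ree\, T(P_k) = 0 \text{ on } M, \qquad k=1,\dots,5,
\end{equation*}
where $T(P_k)$ denotes the holomorphic derivation $T$ applied to $P_k(z,\bar z)$, with the $w$-dependent scalar coefficients of $T$ treated as constants with respect to this derivation.

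The Lemma listing the values of $X(P), Y(P), Z(P), U(P)$ shows that all four of them are supported in the third slot, and the same is therefore true of $Y_0(P), Y_1(P), Z_1(P), U_1(P)$. Since $T$ is a $w$-monomial combination of these vector fields, $T(P_k)\equiv 0$ for $k\in\{1,2,4,5\}$, and only the case $k=3$ requires work. Substituting the values from that Lemma gives $T(P_3)=iR$, with
\begin{equation*}
R = -\tfrac12 w_1^2 P_2 + \tfrac12 w_2^2 P_2 + w_1 w_2 P_1 - 2 w_2 w_5 P_4 - 2 w_2 w_4 P_5 + 2 w_4 w_5 P_2 .
\end{equation*}
Because $\Ree(iR)=-\Imm R$, tangency reduces to proving that $\Imm R=0$ on $M$.

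To finish, I would substitute $w_j = u_j + iP_j$ with $u_j\in\R$ on $M$, use the elementary formulas $\Imm(w_iw_j) = u_iP_j + u_jP_i$ and $\Imm(w_i^2) = 2u_iP_i$, and then group the resulting expression by the real parameters $u_1, u_2, u_4, u_5$ (the variable $u_3$ does not appear in $R$). The coefficients of $u_1$, $u_4$ and $u_5$ cancel term by term, reflecting identities (1), (3) and (4) of the Lemma immediately preceding the Theorem; the coefficient of $u_2$ equals $P_1^2 + P_2^2 - 4P_4P_5$, which vanishes by the identity \eqref{fer1} (which is essentially identity (2) of that same Lemma). Hence $\Imm R \equiv 0$ on $M$, so $T\in hol(M,0)$, and the flow argument of the first paragraph concludes the proof. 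The only delicate point is the arithmetic in computing $\Imm R$; the cancellation has been engineered through the algebraic relation \eqref{fer1} together with the precise scalar coefficients chosen in the definition of $T$.
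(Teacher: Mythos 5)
Your proof is correct and is exactly the argument the paper intends: the paper's entire proof is the phrase ``With the help of the Lemmata, one obtains,'' and your verification that $\Ree T(P_k)=0$ on $M$ via the values $X(P),Y(P),Z(P),U(P)$, the four identities, and the relation \eqref{fer1}, followed by the flow argument, fills in precisely those details. The only (harmless) slip is in the pairing of identities with parameters: the coefficient of $u_4$ cancels by identity (4) and that of $u_5$ by identity (3), not the other way around.
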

\begin{rem}
Notice  that the bound for the   number $k$ of jets  needed to determine uniquely any germ of  biholomorphism sending $M$ to $M$   is  $$k= (1+ \text{codim} \  M),$$  $M$ beeing a generic (Levi generating with non-degenerate Levi map) real-analytic submanifold: see Theorem 12.3.11, page 361 in \cite{BER}. One points out that  Zaitsev  obtained the bound  $k= 2(1+ \text{codim} \  M)$ in \cite{Za}.
\end{rem}
\bigskip

\end{document}